\documentclass[12pt,a4paper, english]{article}

\usepackage[T1]{fontenc}
\usepackage{babel}		
\usepackage[utf8]{inputenc}
\usepackage{amsmath}  
\usepackage{amsthm}		
\usepackage{amsfonts}		
\usepackage{amssymb} 
\usepackage{enumerate} 
\frenchspacing

\newtheorem{teorema}{Theorem}
\newtheorem{lem}[teorema]{Lemma}

\newtheorem{cor}[teorema]{Corollary}
\newtheorem{athm}{Theorem}

\theoremstyle{definition}

\newtheorem{nota}[teorema]{Remark}

\title{On finitely generated left nilpotent braces}
\author{H. Meng\thanks{Department of Mathematics and Newtouch Center for Mathematics, Shanghai University, Shanghai 200444, PR China; \texttt{hymeng2009@shu.edu.cn}; ORCID 0000-0001-9840-5783.} \and A. Ballester-Bolinches%
\thanks{Departament de Matem\`atiques, Universitat de Val\`encia, Dr.\ Moliner, 50, 46100 Burjassot, Val\`encia, Spain; \texttt{Adolfo.Ballester@uv.es}, \texttt{Vicent.Perez-Calabuig@uv.es}; ORCID 0000-0002-2051-9075, 0000-0003-4101-8656.}
\and L. A. Kurdachenko\thanks{Department of Algebra and Geometry, Oles Honchar Dnipro National University, Dnipro 49010, Ukraine; \texttt{lkurdachenko@gmail.com}; ORCID 0000-0002-6368-7319.}\ \thanks{Part of the research of this author has been carried out in the Departament de Matem{\`a}tiques, Universitat de Val{\`e}ncia; Dr.\ Moliner, 50; 46100 Burjassot, Val\`encia, Spain.} \and  V. P\'erez-Calabuig\addtocounter{footnote}{-3}\footnotemark}

\begin{document}
\date{}
\maketitle

\begin{abstract}
A description of finitely generated left nilpotent braces of class at most two is presented in this paper. The description heavily depends on the fact that if $B$ is left nilpotent of class at most~$2$, that is $B^3 = 0$, then $B$ is right nilpotent of class at most~$3$, that is $B^{(4)} = 0$. In addition, we construct a free object in the category of finitely generated left nilpotent braces of class at most~$2$.

  
  \emph{Keywords: finitely generated brace, left nilpotency, right nilpotency, multipermutational level.}

  \emph{Mathematics Subject Classification (2020):
    16T25, 
    81R50, 
    16N40. 
  }
\end{abstract}

\section{Introduction}
A (\emph{left}) \emph{brace} $B$ consists of a set $B$ with two group structures, a commutative one $(B,+)$, and a multiplicative one $(B,\cdot)$, such that the following sort of (left) distributivity property holds: $a \cdot (b+c) = (a \cdot b) - a + (a \cdot c)$ for every $a,b,c\in B$.
 The defining distributivity property of $B$ yields a common identity element $0 \in B$.  Braces were defined in Rump~\cite{Rump07} as a generalisation of Jacobson radical rings $(B,+,\ast)$, where the \emph{star operation $\ast$} satisfies $a \cdot b = a + b + (a \ast b)$ for every $a,b\in B$, and it is non-associative in general. The main motivation was to introduce an algebraic structure to approach the problem of classifying involutive non-degenerate set-theoretic solutions to the Yang-Baxter equation (solutions, for short), a fundamental equation in mathematical physics and quantum group theory.

A comprehensive understanding of the algebraic structure of braces is, therefore, undoubtedly essential for advancing the classification of solutions. For instance, the concept of retraction of a solution---a method for simplifying solutions---has led naturally to the emergence of nilpotency notions in braces. Given a brace $B$, the following iterated series of $\ast$-products naturally arise:
\begin{align*}
(L)\  &B^1 = B \supseteq B^2 = B \ast B \supseteq \ldots \supseteq B^{n+1} = B \ast B^n \supseteq \ldots\\
(R)\  &B^1 = B \supseteq B^{(2)} = B \ast B \supseteq \ldots \supseteq B^{(n+1)} = B^{(n)} \ast B \supseteq \ldots
\end{align*}
Then, $B$ is said to be \emph{left (resp. right) nilpotent of class $n$} if $n$ is the smallest natural such that $B^{n+1} = 0$ (resp. $B^{(n+1)} = 0$). In particular, \textit{trivial braces} are defined to be left and right nilpotent of class~$1$.

In this context, the study of right nilpotency has proven fundamental, as it precisely characterises those solutions that can be recursively retracted until getting a trivial solution after finitely many steps---the so-called multipermutation solutions, one of the most prevalent and well-structured families of solutions of the YBE.

One of the central challenges in the algebraic theory of braces is determining the structure of finitely generated braces---if $b_1, \ldots, b_r$ are elements of a brace $B$, $\langle b_1, \ldots, b_r\rangle$ denotes the smallest subbrace containing them. Unlike in group theory, where the structure of cyclic groups is well understood, even describing one-generated braces is a problem of great difficulty. Nevertheless, notable progress has been made in case of low nilpotency classes of braces: see~\cite{BallesterEstebanKurdachenkoPerezC25-onegenerated, KurdachenkoSubbotin24,DixonKurdachenkoSubbotin25-preprint}, respectively, for right nilpotency class~$2$, left nilpotency class~$2$, and central nilpotency class~$3$. In such cases, central nilpotency---also known as strong nilpotency in braces (see~\cite{Smoktunowicz18-tams})---provides valuable insights into the structure of finitely generated braces. The \emph{centre} $\zeta(B)$ of $B$ is defined in~\cite{BonattoJedlicka23} as
\[ \zeta(B) = \{b\in B\mid a +b = b\cdot a = a\cdot b,\ \text{for all $a\in B$}\}.\]
Then, $B$ is said to be \emph{centrally nilpotent} if there exists a chain of ideals
\[ 0 = I_0 \leq I_1\leq \ldots \leq I_n = B\]
such that $I_k/I_{k-1} \leq \zeta(B/I_{k-1})$ for every $1\leq k \leq n$. It follows that $B$ is centrally nilpotent if, and only if, $B$ is left and right nilpotent (see~\cite[Corollary 2.15]{JespersVanAntwerpenVendramin23}). The notion of central nilpotency in braces closely mirrors nilpotent groups, facilitating a rigorous and detailed structural description (see~\cite{BallesterEstebanFerraraPerezCTrombetti25-central} for further details).

In this light, the following theorem in~\cite{BallesterKurdachenkoPerezC-arXiv-B3-B4} turns out to be crucial.

\begin{teorema}
\label{teo:B3-B4}
Let $B$ be left nilpotent brace of class at most~$2$, i.e. $B^3 = 0$. Then, $B$ is right nilpotent of class at most~$3$, i.e. $B^{(4)} = 0$.
\end{teorema}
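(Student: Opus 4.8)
The plan is to pass to the left action, reduce the statement to a socle-membership claim, and then extract the crucial vanishing from the associativity of the addition. Write $\lambda_a(b) = a\cdot b - a$ for the left action; then $\lambda_a \in \Aut(B,+)$, the map $a \mapsto \lambda_a$ is a homomorphism from $(B,\cdot)$ into $\Aut(B,+)$, and $a\ast b = \lambda_a(b) - b$. Its kernel $\Soc(B) = \{z \in B : z \ast b = 0 \text{ for all } b\}$ is an ideal of $B$, in particular an additive subgroup. The hypothesis $B^3 = B\ast(B\ast B) = 0$ reads $a\ast(b\ast c) = 0$, equivalently that $\lambda_a$ fixes $B^2$ pointwise, so that $a \ast w = 0$ for every $a\in B$ and every $w \in B^2$. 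Since $B^{(4)} = B^{(3)}\ast B$ with $B^{(3)} = B^2 \ast B$, and since $\Soc(B)$ is an additive subgroup whose elements annihilate $B$ on the left of $\ast$, it suffices to prove $B^{(3)} \subseteq \Soc(B)$, i.e. that $(w\ast c)\ast d = 0$ for all $w\in B^2$ and $c,d\in B$.

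First I would record two consequences of the hypothesis. From $\lambda_{x\cdot y} = \lambda_x\lambda_y$ one obtains the general identity $(x\cdot y)\ast c = x\ast c + y\ast c + x\ast(y\ast c)$, whose last term now vanishes, giving $(x\cdot y)\ast c = x\ast c + y\ast c$. Because $\lambda_u$ fixes $B^2$ pointwise, for $v\in B^2$ we have $u\cdot v = u + v$, so $(u+v)\ast c = u\ast c + v\ast c$; in particular right $\ast$-multiplication by a fixed $c$ is additive in its first variable along $B^2$, and $(-w)\ast c = -(w\ast c)$ for $w\in B^2$. Combining these with $x\cdot y = x + y + x\ast y$ and $x\ast y\in B^2$ yields the controlled failure of right distributivity
\[ (x+y)\ast c = x\ast c + y\ast c - (x\ast y)\ast c. \]

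The heart of the argument is then to compute $(a+b+c)\ast d$ in the two ways permitted by the associativity of $+$. Expanding $((a+b)+c)\ast d$ and $(a+(b+c))\ast d$ with the displayed identity, the left-distributive law $a\ast(b+c) = a\ast b + a\ast c$, and the additivity of right $\ast$-multiplication along $B^2$ (applied to the second-order terms, which all lie in $B^2$), every term matches except for a single discrepancy, and equating the two expressions forces $((a\ast b)\ast c)\ast d = 0$ for all $a,b,c,d$. Finally, writing an arbitrary $w\in B^2$ as a sum $\sum_i \pm(a_i\ast b_i)$ and using additivity of right $\ast$-multiplication along $B^2$ twice, I obtain $(w\ast c)\ast d = \sum_i \pm\,((a_i\ast b_i)\ast c)\ast d = 0$, whence $B^{(3)}\subseteq\Soc(B)$ and $B^{(4)} = 0$.

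I expect the main obstacle to be, more than the bookkeeping in the two-way expansion, establishing that right $\ast$-multiplication is additive in its first variable along $B^2$ and that the failure of right distributivity is governed by the single correction term $(x\ast y)\ast c$: both genuinely use $B^3 = 0$ through the identity $u\cdot v = u+v$ for $v\in B^2$, and without this control the two expansions of $(a+b+c)\ast d$ would not collapse to the clean identity $((a\ast b)\ast c)\ast d = 0$.
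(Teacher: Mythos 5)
Your proof is correct. A preliminary remark: this paper does not prove Theorem~\ref{teo:B3-B4} at all --- it imports it from the companion paper \cite{BallesterKurdachenkoPerezC-arXiv-B3-B4} --- so there is no internal proof to match; your argument has to stand on its own, and it does. The identities you set up are precisely the ones this paper records for later use: your $(x\cdot y)\ast c = x\ast c + y\ast c$ is~\eqref{dist_prod}, your $u\cdot v = u+v$ for $v\in B^2$ is~\eqref{B2inFix}, and combining them via $x+y = x\cdot y - x\ast y$ with $-(x\ast y)\in B^2$ legitimately yields your correction formula $(x+y)\ast c = x\ast c + y\ast c - (x\ast y)\ast c$. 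I verified the two-way expansion: using this formula, $a\ast(b+c)=a\ast b+a\ast c$, and first-variable additivity of $-\ast d$ along $B^2$ (which applies to the second-order terms because $B^{(3)}=B^{(2)}\ast B\subseteq B\ast B=B^2$, a containment you use tacitly and should state explicitly), one gets
\begin{align*}
((a+b)+c)\ast d &= a\ast d+b\ast d+c\ast d-(a\ast b)\ast d-(a\ast c)\ast d-(b\ast c)\ast d+((a\ast b)\ast c)\ast d,\\
(a+(b+c))\ast d &= a\ast d+b\ast d+c\ast d-(a\ast b)\ast d-(a\ast c)\ast d-(b\ast c)\ast d,
\end{align*}
so associativity of $+$ forces $((a\ast b)\ast c)\ast d=0$ exactly as you claim; the passage from these generators to all of $B^{(3)}$ via additivity along $B^2$, and then to $B^{(4)}=0$ via the socle (of which you only need that it is an additive subgroup, immediate from $\lambda_z=\mathrm{id}\Rightarrow z+z'=z\cdot z'$), is sound. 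Methodologically, your route is a pleasing counterpart to what this paper does in its own proofs: in Theorem~\ref{teo:B} the authors extract the relation $b_{ijk}=b_{jik}$ from associativity of the \emph{multiplication}, working coordinatewise inside a finitely generated brace, whereas you extract the vanishing $((a\ast b)\ast c)\ast d=0$ from associativity of the \emph{addition}, coordinate-free and with no generation hypothesis --- arguably the most economical way to see that $B^3=0$ implies $B^{(4)}=0$.
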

This article is a natural continuation of~\cite{BallesterKurdachenkoPerezC-arXiv-B3-B4}. We show that Theorem~\ref{teo:B3-B4} lays the foundations to prove a detailed description of finitely generated left nilpotent braces of class at most~$2$.

\begin{athm}
\label{teo:A}
Let $B$ be a left nilpotent brace of class at most~$2$. Let $b_1, \ldots, b_r \in B$ such that $B = \langle b_1, \ldots, b_r\rangle$. Then, $B = L_1 + L_2 + L_3$ with
\begin{itemize}
\item $L_1 = \left\{\sum_{i=1}^r z_ib_i\mid z_i \in \mathbb{Z}, 1 \leq i \leq r\right\}$.
\item $L_2 = \left\{\sum_{i=1}^r z_{ij}b_{ij}\mid z_{ij} \in \mathbb{Z}, 1 \leq i,j \leq r\right\}$, where $b_{ij} = b_i \ast b_j$, for every $1\leq i, j\leq r$.
\item $L_3 = \left\{\sum_{i=1}^r z_{ijk}b_{ijk}\mid z_{ijk} \in \mathbb{Z}, 1 \leq i,j,k \leq r\right\}$, where $b_{ijk} = (b_i \ast b_j) \ast b_k$, for every $1\leq i, j, k\leq r$.
\end{itemize}
Therefore, the rank $r(B)$ of the additive group $(B,+)$ is less than $r + r^2 + r^3$.
\end{athm}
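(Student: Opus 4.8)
The plan is to show that the additive subgroup $S := L_1 + L_2 + L_3$ is in fact a subbrace of $B$. Since $S$ contains each generator $b_i \in L_1$ and $B = \langle b_1, \ldots, b_r\rangle$ is by definition the smallest subbrace containing them, this forces $B = S$, which is exactly the asserted decomposition; the rank estimate then follows, since $(B,+)=S$ is generated by the at most $r + r^2 + r^3$ elements $b_i$, $b_{ij}$, $b_{ijk}$. As $S$ is visibly an additive subgroup, the whole content is to prove that $S$ is closed under $\ast$ (equivalently under $\bcdot$, since $a\bcdot b = a + b + a\ast b$). Throughout I would exploit two vanishing facts: first, $B^3 = 0$ reads $B\ast B^2 = 0$, so $x\ast y = 0$ whenever $y\in B^2$ (in particular $a\ast(b\ast c)=0$ for all $a,b,c$); second, Theorem~\ref{teo:B3-B4} gives $B^{(4)} = 0$, that is $z\ast d = 0$ for every $z\in B^{(3)} = (B\ast B)\ast B$ and every $d\in B$.

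The technical heart is to control $\ast$ in its \emph{first} argument, where no distributivity is available a priori. Writing $\lambda_a(b) = a\bcdot b - a = b + a\ast b$, the maps $\lambda_a$ are additive automorphisms of $(B,+)$ with $\lambda_{a\bcdot b} = \lambda_a\lambda_b$ and $a + b = a\bcdot\lambda_a^{-1}(b)$. Using $B^3 = 0$ one checks $\lambda_a^{-1}(b) = b - a\ast b$, and a short computation with these relations yields the key identity
\[ (a + b)\ast c = a\ast c + b\ast c - (a\ast b)\ast c \qquad (a,b,c\in B). \]
Two consequences drive the argument. Taking $a,b\in B^2$ makes $a\ast b\in B\ast B^2 = 0$, so $\ast$ \emph{is} additive, indeed $\mathbb{Z}$-linear, in the first variable once that variable is restricted to $B^2$. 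And rearranging the identity as $(a\ast b)\ast c = a\ast c + b\ast c - (a+b)\ast c$ exhibits every triple product $(a\ast b)\ast c$ as an element of $B\ast B = B^2$; hence $L_3\subseteq B^2\cap B^{(3)}$.

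With these tools the closure $S\ast S\subseteq S$ reduces in stages. Because $\ast$ is additive in the second variable and $L_2,L_3\subseteq B^2$, for $x\in S$ and $y=y_1+y_2+y_3\in S$ one has $x\ast y = x\ast y_1$ with $y_1\in L_1$, so it suffices to prove $x\ast b_k\in S$ for every generator $b_k$. Splitting $x = x_1 + w$ with $x_1\in L_1$ and $w = x_2+x_3\in B^2$, the key identity collapses (its correction $(x_1\ast w)\ast b_k$ vanishes since $w\in B^2$) to $x\ast b_k = x_1\ast b_k + w\ast b_k$. The term $w\ast b_k$ is handled by first-variable additivity on $B^2$: it is a $\mathbb{Z}$-combination of $b_{ij}\ast b_k = b_{ijk}\in L_3$ and of $b_{ijl}\ast b_k$, the latter vanishing because $b_{ijl}\in B^{(3)}$ and $B^{(4)} = 0$. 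The remaining term $x_1\ast b_k$ with $x_1 = \sum_i y_i b_i$ is treated by induction on $\sum_i|y_i|$: peeling off one generator via the key identity produces a linear term $b_i\ast b_k = b_{ik}\in L_2$, a shorter first argument handled by the inductive hypothesis, and a correction $(x'\ast b_i)\ast b_k$; here $x'\ast b_i\in L_2+L_3\subseteq B^2$ by induction, so, again by additivity on $B^2$ together with $B^{(4)} = 0$, this correction lands in $L_3$.

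The main obstacle is precisely the absence of first-variable distributivity: it is what makes it nonobvious that no $\ast$-monomials of length greater than three, and no products with compound first entries, are needed. The identity $(a+b)\ast c = a\ast c + b\ast c - (a\ast b)\ast c$, combined with the nilpotency-driven vanishings $B\ast B^2 = 0$ and $B^{(4)} = 0$, is exactly what tames it: it turns $\ast$ into an additive operation on $B^2$ and truncates every correction term at length three. Once $S = B$ is established, the bound $r(B)\le r + r^2 + r^3$ is immediate from the exhibited generating set, and the strict inequality stated in the theorem follows from the additive relations among the $b_{ij}$ and $b_{ijk}$ that this same analysis brings to light.
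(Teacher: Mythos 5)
Your proposal is correct and follows essentially the same route as the paper: your key identity $(a+b)\ast c = a\ast c + b\ast c - (a\ast b)\ast c$ is exactly what the paper extracts from~\eqref{a+b=abast} together with~\eqref{dist_prod} and~\eqref{inv_ast_-}, and your reductions via $B\ast B^2 = 0$, $B^{(4)} = 0$ (Theorem~\ref{teo:B3-B4}) and induction on the $L_1$-part mirror the paper's argument (which peels off one generator $\alpha_r b_r$ at a time using Corollary~\ref{cor:(ma)*(nc)}, rather than inducting on $\sum_i |y_i|$). The one step you gloss is closure under multiplicative inverses---$\ast$-closure plus additive closure only gives a submonoid of $(B,\cdot)$ a priori, and the paper verifies $x^{-1}\in S$ by an explicit construction---but this is repaired in one line inside your own framework, since Lemma~\ref{lem:a^m=ma+a*a} with $m=-1$ yields $x^{-1} = -x + x\ast x \in S$.
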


The rank $r(B)$ can be bounded with a lower upper bound than in Theorem~\ref{teo:A}.
\begin{athm}
\label{teo:B}
In the previous situation, for every $1 \leq i < j \leq r$ and every $1\leq k \leq r$, $(b_i\ast b_j) \ast b_k = (b_j \ast b_i) \ast b_k$.
\end{athm}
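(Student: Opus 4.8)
The plan is to prove the stronger, element-free statement that $(a \ast b) \ast c = (b \ast a) \ast c$ for \emph{all} $a, b, c \in B$, which then specialises to the generators $b_i, b_j, b_k$. Throughout I would work with the additive translation $\lambda_a(b) = a \cdot b - a$, an automorphism of $(B,+)$ satisfying $\lambda_{a \cdot b} = \lambda_a \lambda_b$ and $a \ast b = \lambda_a(b) - b$; in particular $a \ast (\cdot)$ is additive. The two consequences of $B^3 = 0$ that I would extract are: first, that $B^2$ is an ideal closed under $\ast$ with $B$ on either side, so that $a \ast b \in B^2$ and $(a \ast b) \ast c \in B^2 \ast B \subseteq B^2$; and second, that every $\lambda_x$ fixes $B^2$ pointwise, since $x \ast (b \ast c) = \lambda_x(b \ast c) - (b \ast c) = 0$ forces $\lambda_x(w) = w$ for the additive generators $w = b \ast c$ of $B^2$, and hence on all of $B^2$ by additivity.

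Next I would record two expansions of $(a \cdot b) \ast c$. The first is the standard brace identity $(a \cdot b) \ast c = a \ast c + b \ast c + a \ast (b \ast c)$, obtained by evaluating $\lambda_{a \cdot b} = \lambda_a \lambda_b$ at $c$ and unwinding the definition of $\ast$; since $a \ast (b \ast c) \in B^3 = 0$, this collapses to the manifestly symmetric $(a \cdot b) \ast c = a \ast c + b \ast c$. The second expansion splits $a \cdot b = (a + b) + (a \ast b)$ additively and reads $(a \cdot b) \ast c = (a + b) \ast c + (a \ast b) \ast c$.

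The crux is establishing this second expansion, that is, that $\ast$ is additive in its first argument once the summand lies in $B^2$: for $w \in B^2$ and arbitrary $u, c$ one has $(u + w) \ast c = u \ast c + w \ast c$. Here I would use the identity $u + w = u \cdot \lambda_u^{-1}(w)$, whence $\lambda_{u + w} = \lambda_u \lambda_{\lambda_u^{-1}(w)}$; since $\lambda_u^{-1}$ fixes $B^2$ pointwise, $\lambda_u^{-1}(w) = w$ and thus $\lambda_{u+w} = \lambda_u \lambda_w$. Evaluating at $c$ gives $\lambda_{u+w}(c) = \lambda_u(c + w \ast c)$, and because $w \ast c \in B^2$ is fixed by the additive map $\lambda_u$, this equals $\lambda_u(c) + w \ast c$; subtracting $c$ yields the claim. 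Taking $u = a + b$ and $w = a \ast b$ then produces the second expansion. This step, marrying the formula for $\lambda_{u+w}$ with the pointwise triviality of every $\lambda_x$ on $B^2$, is where the hypothesis $B^3 = 0$ does the real work, and is the part I expect to require the most care.

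Finally I would equate the two expansions to obtain $(a \ast b) \ast c = a \ast c + b \ast c - (a + b) \ast c$. The right-hand side is visibly symmetric in $a$ and $b$: the term $a \ast c + b \ast c$ is symmetric, and $(a + b) \ast c = (b + a) \ast c$ because $(B,+)$ is commutative. Hence $(a \ast b) \ast c = (b \ast a) \ast c$, and restricting to $a = b_i$, $b = b_j$, $c = b_k$ completes the proof.
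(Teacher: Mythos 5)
Your proof is correct, and it takes a genuinely different---and stronger---route than the paper's. Every step checks out: $B^3=0$ does give that each $\lambda_x$ is the identity on $B^2$ (the generators $b\ast c$ are fixed since $x\ast(b\ast c)=0$, and additivity of $\lambda_x$ extends this), your crux lemma $(u+w)\ast c = u\ast c + w\ast c$ for $w\in B^2$ is valid via $\lambda_{u+w}=\lambda_u\lambda_{\lambda_u^{-1}(w)}=\lambda_u\lambda_w$, and equating the two expansions of $(ab)\ast c$ yields the manifestly symmetric identity $(a\ast b)\ast c = a\ast c + b\ast c - (a+b)\ast c$ for \emph{all} $a,b,c$ in \emph{any} brace with $B^3=0$. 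The paper proceeds quite differently: it only proves the statement for the generators $b_i$, by invoking the normal form of Theorem~\ref{teo:A}, computing the full coordinate formulas for products in Lemma~\ref{lem:producte_2-gen}, and then comparing the $ijk$-coordinates of $(xy)z$ and $x(yz)$, so that associativity of $(B,\cdot)$ forces $\sum_{i<j,k} x_jy_iz_k(b_{ijk}-b_{jik})=0$ and a specialisation of the coefficients gives $b_{ijk}=b_{jik}$. Your argument is more conceptual and self-contained---it needs neither finite generation, nor Theorem~\ref{teo:A}, nor Theorem~\ref{teo:B3-B4}---and it is in fact expressible in one line using the paper's own toolkit: since $u+w=uw$ when $w\in B^2$ by~\eqref{B2inFix}, property~\eqref{dist_prod} gives both $(ab)\ast c = a\ast c + b\ast c$ and $(ab)\ast c=\big((a+b)(a\ast b)\big)\ast c=(a+b)\ast c + (a\ast b)\ast c$, whence the symmetric formula. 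What the paper's heavier computation buys is the explicit multiplication law in coordinates, which it reuses immediately afterwards (Remark~\ref{nota:nou-producte2gen} and the free-object construction of Theorem~\ref{teo:freergen}); what yours buys is generality (the symmetry holds for arbitrary elements, not just generators) and a proof of Theorem~\ref{teo:B} that is independent of the rest of the machinery.
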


\begin{nota}
Let $B$ be a non-trivial $r$-generated left nilpotent brace of class~$2$. From Theorem~\ref{teo:B}, we conclude that either $B$ is right nilpotent of class~$2$ so that $r+1 \leq r(B) \leq r + r^2$, or $B$ is right nilpotent of class~$3$ so that $r+2 \leq r(B) \leq r + r^2\left(2 + \frac{r-1}{2}\right)$.
\end{nota}

The previous upper bound is effective as we describe a free object in the category of $r$-generated left nilpotent braces of class at most~$2$.


\begin{athm}
\label{teo:freergen}
Let $X$ be the alphabet given by the disjoint union of 
\[ \begin{array}{rl}
X_1 = \{\mathbf{x}_i \mid 1 \leq i \leq r\},& X_2 = \{\mathbf{x}_{ij} \mid 1 \leq i,j \leq r\},\\
X_{3,1} = \{\mathbf{x}_{iij} \mid 1 \leq i,j \leq r\},& X_{3,2} = \{\mathbf{x}_{ijk} \mid 1 \leq i < j \leq r, 1 \leq k \leq r, \}
\end{array}\]
and take $D$ the free abelian group over~$X$, so that every $\mathbf{d} \in D$ can be written as $\mathbf{d} = \sum_{\alpha} d_\alpha \mathbf{x}_{\alpha}$, where $d_\alpha \in \mathbb{Z}$ for every index $\alpha$ of an element $x_\alpha \in X$. Consider in $D$ the usual addition, given by $\mathbf{d} + \mathbf{e} = \mathbf{f}$, where $f_\alpha = d_\alpha + e_\alpha$ for every $\alpha$, and the product given by $\mathbf{d}\cdot\mathbf{e} = \mathbf{f}$, where
\[
\begin{array}{ll}
f_i = d_i + e_i, & \text{for each $1 \leq i \leq r$;}\\
f_{ij} = d_{ij} + e_{ij} + d_ie_j, & \text{for each $1 \leq i,j\leq r$;}\\ 
f_{iij} = d_{iij} + e_{iij} + \left(d_{ii} - \frac{d_i(d_i-1)}{2}\right)e_j, & \text{for each $1 \leq i,j\leq r$;}\\
f_{ijk} = d_{ijk}\!+\! e_{ijk}\! +\!(d_{ij} - d_id_j + d_{ji})e_k, & \text{for each $1 \leq i < j \leq r$, $1 \leq k \leq r$.}
\end{array}\]
Then, it follows that $(D,+,\cdot)$ is a brace such that $D^3 = 0$, and is $r$-generated as $D = \langle \mathbf{x}_1, \ldots, \mathbf{x}_r\rangle$. Moreover, given $C = \langle c_1, \ldots, c_r \rangle$ an $r$-generated brace for some $c_1, \ldots, c_r \in C$, and with $C^3 = 0$, there exists a unique homomorphism $\varphi\colon D \rightarrow C$ such that $\varphi(\mathbf{x}_i) = c_i$ for every $1\leq i \leq r$, and $\varphi(D) = \langle c_1, \ldots, c_r\rangle$.
\end{athm}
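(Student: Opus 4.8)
The plan is to recast the definition of the product through its associated left action and then verify, in order, the brace axioms, the nilpotency, the generation, and finally the universal property. Writing $\mathbf{d}\cdot\mathbf{e} = \mathbf{d} + \lambda_{\mathbf{d}}(\mathbf{e})$, the displayed formulas define, for each fixed $\mathbf{d}$, an additive endomorphism $\lambda_{\mathbf{d}}$ of $(D,+)$ whose matrix is unitriangular: every correction term to a coordinate in $X_2\cup X_{3,1}\cup X_{3,2}$ depends only on the $X_1$-coordinates of its argument, which are themselves left unchanged, so $\lambda_{\mathbf{d}}\in\Aut(D,+)$. Since $\lambda_{\mathbf{d}}$ is additive, the left distributivity axiom is automatic, and the entire assertion ``$(D,+,\cdot)$ is a brace'' reduces to the cocycle identity $\lambda_{\mathbf{d}\cdot\mathbf{e}} = \lambda_{\mathbf{d}}\circ\lambda_{\mathbf{e}}$: this yields associativity of $\cdot$, while $\mathbf{0}$ is plainly the identity and $\lambda_{\mathbf{d}}^{-1}(-\mathbf{d})$ is a right inverse of $\mathbf{d}$, so (a monoid in which every element has a right inverse being a group) $(D,\cdot)$ is a group. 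Checking the cocycle identity is the only genuine computation at this stage; it splits over the four coordinate families, and the delicate case is the $X_{3,1}$-coordinate, which closes precisely because of the elementary identity $d_ie_i - \binom{d_i+e_i}{2} = -\binom{d_i}{2}-\binom{e_i}{2}$. This is exactly what the term $-\tfrac{d_i(d_i-1)}{2}$ in the definition of $D$ is designed to absorb; the $X_{3,2}$-coordinate works by a symmetric/antisymmetric cancellation.

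Next I would read off $\mathbf{d}\ast\mathbf{e} = \lambda_{\mathbf{d}}(\mathbf{e})-\mathbf{e}$ from the formulas: its $X_1$-coordinates vanish and the remaining ones depend only on the $X_1$-part of $\mathbf{e}$. Hence $D^2$ lies in the span of $X_2\cup X_{3,1}\cup X_{3,2}$, all of whose elements have zero $X_1$-part, so a second $\ast$-multiplication by anything gives $0$; thus $D^3 = D\ast D^2 = 0$. For the generation claim, direct substitution gives $\mathbf{x}_i\ast\mathbf{x}_j = \mathbf{x}_{ij}$, $(\mathbf{x}_i\ast\mathbf{x}_i)\ast\mathbf{x}_j = \mathbf{x}_{iij}$ and $(\mathbf{x}_i\ast\mathbf{x}_j)\ast\mathbf{x}_k = \mathbf{x}_{ijk}$ for $i<j$, so $\langle\mathbf{x}_1,\dots,\mathbf{x}_r\rangle$ contains every free generator of $(D,+)$ and therefore equals $D$.

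For the universal property, uniqueness is immediate, since these same identities express each free generator of $(D,+)$ as a $\ast$-word in $\mathbf{x}_1,\dots,\mathbf{x}_r$, so a brace homomorphism is determined on a $\mathbb{Z}$-basis by the values $\varphi(\mathbf{x}_i)=c_i$. For existence I would define $\varphi$ on the basis $X$ by $\mathbf{x}_i\mapsto c_i$, $\mathbf{x}_{ij}\mapsto c_i\ast c_j$, $\mathbf{x}_{iij}\mapsto (c_i\ast c_i)\ast c_j$ and $\mathbf{x}_{ijk}\mapsto (c_i\ast c_j)\ast c_k$, extend $\mathbb{Z}$-linearly, and note that, $\varphi$ being additive, multiplicativity is equivalent to $\varphi(\mathbf{d}\ast\mathbf{e}) = \varphi(\mathbf{d})\ast\varphi(\mathbf{e})$. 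The left-hand side is obtained by applying $\varphi$ to the explicit formula for $\mathbf{d}\ast\mathbf{e}$; the task is to match it with $\varphi(\mathbf{d})\ast\varphi(\mathbf{e})$ computed inside $C$. Here I would use three facts about $C$: that $\ast$ is additive in the second variable, that $C\ast C^2 = 0$ (as $C^3=0$), and that $(a\cdot b)\ast c = a\ast c + b\ast c$ (from the general identity $(a\cdot b)\ast c = a\ast(b\ast c)+a\ast c+b\ast c$ together with $a\ast(b\ast c)\in C^3 = 0$). The first two reduce $\varphi(\mathbf{d})\ast\varphi(\mathbf{e})$ to $\sum_j e_j\big(\varphi(\mathbf{d})\ast c_j\big)$; writing $\varphi(\mathbf{d}) = \sum_i d_ic_i + w$ with $w\in C^2$ and using that addition by the $C^2$-element $w$ coincides with $\cdot$-multiplication by it (since $a\ast w=0$), the third fact linearizes $\varphi(\mathbf{d})\ast c_j$ up to corrections of the form $(a\ast b)\ast c_j$, while all iterated triple corrections vanish because $C^{(4)} = 0$ by Theorem~\ref{teo:B3-B4}. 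A pair-counting argument then produces exactly the coefficients $\binom{d_i}{2}$ on $(c_i\ast c_i)\ast c_j$ and $d_id_l$ on $(c_i\ast c_l)\ast c_j$, and Theorem~\ref{teo:B} identifies $(c_l\ast c_i)\ast c_j$ with $(c_i\ast c_l)\ast c_j$ so that everything collapses onto the chosen basis $i<l$; comparing coefficients gives the equality. Finally $\varphi(D)=\langle c_1,\dots,c_r\rangle$ because $\varphi$ is a homomorphism carrying the generators $\mathbf{x}_i$ to the $c_i$.

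The main obstacle is precisely this matching inside the abstract brace $C$. Unlike the second variable, the first variable of $\ast$ is not additive, and its failure is controlled by correction terms $(a\ast b)\ast c$ that are genuinely nonzero when $C$ has right nilpotency class $3$; the computation only closes because Theorem~\ref{teo:B3-B4} forces $C^{(4)}=0$, so these corrections are themselves ``linear'' (no deeper nesting survives), and because Theorem~\ref{teo:B} removes the asymmetry between $(c_i\ast c_l)\ast c_j$ and $(c_l\ast c_i)\ast c_j$. The bookkeeping of the binomial coefficient $\binom{d_i}{2}$, forced by passing from additive multiples $d_ic_i$ to $\cdot$-powers, is mirrored exactly by the correction term in the defining product of $D$, and it is this matching that makes $\varphi$ a homomorphism on the nose.
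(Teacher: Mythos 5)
Your proposal is correct, and at bottom it runs on the same computations as the paper, but you package them differently in two places, both worth noting. For the brace axioms, the paper verifies distributivity by an explicit coordinatewise check and disposes of associativity by pointing back to the $s_\alpha = t_\alpha$ computations in the proof of Theorem~\ref{teo:B} (with the symmetrised product of Remark~\ref{nota:nou-producte2gen}), also writing the inverse out explicitly; you instead pass to the $\lambda$-map formulation $\mathbf{d}\cdot\mathbf{e} = \mathbf{d} + \lambda_{\mathbf{d}}(\mathbf{e})$, where additivity of $\lambda_{\mathbf{d}}$ makes distributivity automatic, unitriangularity gives invertibility of $\lambda_{\mathbf{d}}$ and hence of $\mathbf{d}$, and everything reduces to the cocycle identity $\lambda_{\mathbf{d}\mathbf{e}} = \lambda_{\mathbf{d}}\circ\lambda_{\mathbf{e}}$. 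Your identity $d_ie_i - \binom{d_i+e_i}{2} = -\binom{d_i}{2} - \binom{e_i}{2}$ is precisely the algebra behind the paper's $s_{iij} = t_{iij}$ verification, and your cross-term cancellation in the $X_{3,2}$-coordinate is the $s_{ijk} = t_{ijk}$ computation after symmetrisation, so this buys economy (no separate distributivity or inverse check) rather than new content. For the universal property, the paper gets multiplicativity of $\varphi$ essentially for free: Lemma~\ref{lem:producte_2-gen} together with Theorem~\ref{teo:B} (summarised in Remark~\ref{nota:nou-producte2gen}) already shows that the product of the chosen coordinate representatives in \emph{any} $r$-generated $C$ with $C^3 = 0$ obeys the very formulas defining $D$; you re-derive this inside $C$ from scratch, and your ingredients --- additivity of $\ast$ in the second argument, $(ab)\ast c = a\ast c + b\ast c$ from $C^3 = 0$, vanishing of deeper corrections from $C^{(4)} = 0$ via Theorem~\ref{teo:B3-B4}, and the symmetrisation $(c_i\ast c_l)\ast c_j = (c_l\ast c_i)\ast c_j$ from Theorem~\ref{teo:B} --- are exactly those of Lemma~\ref{lem:dist_ni_bk} and Lemma~\ref{lem:producte_2-gen}, so citing those lemmas would shorten your argument considerably. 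One cosmetic slip: the coefficients your pair-counting produces on $(c_i\ast c_i)\ast c_j$ and $(c_i\ast c_l)\ast c_j$ are $-\binom{d_i}{2}$ and $-d_id_l$ (as in Lemma~\ref{lem:dist_ni_bk}), not $+\binom{d_i}{2}$ and $+d_id_l$ as written; since the defining product of $D$ carries the same signs, the matching you assert is unaffected.
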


The one-generated case is studied in~\cite{KurdachenkoSubbotin24}:  there is a description of a $1$-generated brace $B$ with~\mbox{$B^3 = 0$} (cf.~\cite[Theorem~A]{KurdachenkoSubbotin24}); and a description of a free object in the category of $1$-generated left nilpotent braces of class at most~$2$ (cf.~\cite[Theorems~B$_1$ and B$_2$]{KurdachenkoSubbotin24}). These results are a direct consequence of the previous Theorems~\ref{teo:A} and~\ref{teo:freergen}, respectively.

\section{Proof of Theorem~\ref{teo:A}}

Throughout this section, we use the convention that star products come before products, written by juxtaposition, which in turn come before sums. Moreover, we use the following well-known properies of the star product:
\begin{align}
(ab) \ast c & = a \ast (b \ast c) + b \ast c + a \ast c; \nonumber \\
a \ast (b+c) & = a \ast b + a \ast c \label{dist_sum}\\
a + b &= ab - a \ast b
\end{align}
for every $a,b,c\in B$. Concretely, if $B^3 = 0$ then, for every $a,b,c\in B$
\begin{align}
(ab) \ast c & = a \ast c + b \ast c; \label{dist_prod}\\
a^{-1}\ast c & = -a \ast c; \label{inv_ast_-}\\
ac & = a+c, \ \text{if $c \in B^2$; in particular, $c^m = mc$, for every $m\in \mathbb{Z}$;} \label{B2inFix}\\
a+b & = ab(a \ast b)^{-1}. \label{a+b=abast}
\end{align}

We need the following lemma.

\begin{lem}
\label{lem:a^m=ma+a*a}
Let $B$ be a brace with $B^3 = 0$. Then, for every $a\in B$ and every $m \in \mathbb{Z}$, it holds
\[ a^m = ma + \frac{m(m-1)}{2}a \ast a.\]
Consequently, $ma = a^m(a \ast a)^s$ where $s = -\dfrac{m(m-1)}{2}$.
\end{lem}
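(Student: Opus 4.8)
The plan is to prove the formula for non-negative $m$ by induction and then bootstrap to negative $m$. The crucial observation to keep in mind throughout is that the two distributivity rules at our disposal are \emph{asymmetric}: left distributivity $a\ast(b+c)=a\ast b+a\ast c$ holds unconditionally by~\eqref{dist_sum}, whereas there is in general no distributivity of $\ast$ over $+$ in the \emph{first} argument. Consequently I would arrange the recursion so that the growing power always occupies the \emph{second} slot of a star product, writing $a^{m+1}=a\cdot a^{m}=a+a^{m}+a\ast a^{m}$ (from $ab=a+b+a\ast b$) rather than $a^{m}\cdot a$. This single design choice is what makes the computation legal, and I expect the only genuine obstacle to be the temptation to ``distribute on the left'', which must be avoided.

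The base cases $m=0$ and $m=1$ are immediate. For the inductive step, assuming $a^{m}=ma+\frac{m(m-1)}{2}\,a\ast a$, I would apply~\eqref{dist_sum} term by term to get
\[
a\ast a^{m}=a\ast(ma)+\tfrac{m(m-1)}{2}\,a\ast(a\ast a)=m(a\ast a)+0,
\]
where $a\ast(ma)=m(a\ast a)$ by iterating~\eqref{dist_sum}, and $a\ast(a\ast a)=0$ because $a\ast a\in B^{2}$ forces $a\ast(a\ast a)\in B\ast B^{2}=B^{3}=0$. Substituting into $a^{m+1}=a+a^{m}+a\ast a^{m}$ and collecting the coefficient $\frac{m(m-1)}{2}+m=\frac{(m+1)m}{2}$ of $a\ast a$ closes the induction for all $m\geq 0$.

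For negative exponents I would first settle $m=-1$ directly: putting $a^{-1}:=-a+a\ast a$, a short computation using $a\ast(-a)=-(a\ast a)$ (itself a consequence of~\eqref{dist_sum}) and $a\ast(a\ast a)=0$ shows $a\cdot(-a+a\ast a)=0$, so this is indeed the multiplicative inverse and the formula holds at $m=-1$. Then, for $m=-n$ with $n>0$, I would apply the already-proved positive-exponent formula to the element $a^{-1}$, obtaining $a^{-n}=(a^{-1})^{n}=n\,a^{-1}+\frac{n(n-1)}{2}\,(a^{-1}\ast a^{-1})$. Here $a^{-1}\ast a^{-1}=a\ast a$: indeed $a^{-1}\ast a^{-1}=-a\ast a^{-1}$ by~\eqref{inv_ast_-}, while $a\ast a^{-1}=a\ast(-a+a\ast a)=-(a\ast a)$. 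Replacing $a^{-1}=-a+a\ast a$ additively and simplifying the coefficient $n+\frac{n(n-1)}{2}=\frac{n(n+1)}{2}$ recovers exactly $a^{-n}=-na+\frac{(-n)(-n-1)}{2}\,a\ast a$, as required.

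Finally, the ``consequently'' clause is a mere rearrangement. From the main identity, $ma=a^{m}-\frac{m(m-1)}{2}(a\ast a)=a^{m}+s(a\ast a)$ with $s=-\frac{m(m-1)}{2}$. Since $a\ast a\in B^{2}$, equation~\eqref{B2inFix} yields both $(a\ast a)^{s}=s(a\ast a)$ and $a^{m}(a\ast a)^{s}=a^{m}+(a\ast a)^{s}$, and combining these gives $ma=a^{m}(a\ast a)^{s}$.
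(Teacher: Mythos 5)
Your proposal is correct and takes essentially the same route as the paper: induction on non-negative exponents, followed by reduction of negative exponents to the positive case via $a^{-1} = -a + a\ast a$ and $a^{-1}\ast a^{-1} = a\ast a$, and the same rearrangement using~\eqref{B2inFix} for the final clause. The only cosmetic difference is that your inductive step expands $a\cdot a^{m} = a + a^{m} + a\ast a^{m}$ and applies the $\ast$-distributivity~\eqref{dist_sum}, whereas the paper expands $a(a^{m})$ directly via the brace distributivity of the product together with $a^2 = 2a + a\ast a$ and $ab = a+b$ --- two equivalent forms of the same computation.
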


\begin{proof}
Write $b = a \ast a$.  Assume by induction that $a^m =  ma + \frac{m(m-1)}{2}b$ for some $1\leq m$. The distributivity property of braces yields
\begin{align*}
a^{m+1} & = a(a^m) = a\left(ma + \frac{m(m-1)}{2}b\right)  \\
& =  ma^2 + \frac{m(m-1)}{2}ab - \left(m-1 + \frac{m(m-1)}{2}\right)a
\end{align*}
Since $a^2 = 2a + a \ast a = 2a + b$ and $ab = a + b$ by~\eqref{B2inFix}, we obtain that
\begin{align*}
a^{m+1} & = 2ma + mb + \frac{m(m-1)}{2}a + \frac{m(m-1)}{2}b - \left(m-1 + \frac{m(m-1)}{2}\right)a\\
& = (m+1)a + \left(m + \frac{m(m-1)}{2}\right)b = (m+1)a + \frac{(m+1)m}{2}b.
\end{align*}
Therefore, the equality holds for every positive integer $m$.

Now, observe that $a(a+a^{-1}) = a^2 -a = a + a \ast a = a(a\ast a)$. Therefore, it follows that $b = a \ast a = a + a^{-1} = a^{-1}\ast a^{-1}$. By the previous paragraph, we have that for every positive integer $m$
\begin{align*}
a^{-m} & = (a^{-1})^m  = ma^{-1} + \frac{m(m-1)}{2}a^{-1}\ast a^{-1} =  m(b - a) + \frac{m(m-1)}{2}b\\
& = (-m)a + \frac{m(m+1)}{2}b = (-m)a + \frac{(-m)(-m-1)}{2}b,
\end{align*}
and therefore, the property holds for every $m \in \mathbb{Z}$.

Using~\eqref{B2inFix}, we also conclude that $ma = a^m(a \ast a)^s$,  with $s = -\frac{m(m-1)}{2}$, for every $m\in \mathbb{Z}$.
\end{proof}

\begin{cor}
\label{cor:(ma)*(nc)}
Let $B$ be a brace with $B^3 = 0$. Then, for every integers $m,n$ and every $a,c\in B$ it holds
\begin{align*}
(ma) \ast (nc) = nm(a \ast c) - \frac{nm(m-1)}{2}(a\ast a) \ast c
\end{align*}
\end{cor}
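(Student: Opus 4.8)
The plan is to peel off the two integer coefficients one at a time, treating the right-hand and left-hand arguments separately and exploiting the fact that the $\ast$-product distributes over sums on the right (via~\eqref{dist_sum}) but only over products on the left (via~\eqref{dist_prod}). First I would dispose of the right-hand argument. Since $a \ast 0 = 0$, putting $b=-c$ in~\eqref{dist_sum} gives $x \ast (-c) = -(x \ast c)$, and a routine induction on $|n|$ using~\eqref{dist_sum} then yields $x \ast (nc) = n(x \ast c)$ for every $x \in B$ and every $n \in \mathbb{Z}$. Applying this with $x = ma$ reduces the claim to
\[ (ma)\ast(nc) = n\bigl[(ma)\ast c\bigr],\]
so it only remains to establish $(ma) \ast c = m(a \ast c) - \tfrac{m(m-1)}{2}\,(a \ast a)\ast c$.

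For the left-hand argument the obstacle is that $ma$ is an \emph{additive} multiple, to which~\eqref{dist_prod} does not directly apply. The remedy is Lemma~\ref{lem:a^m=ma+a*a} in its multiplicative form $ma = a^m (a \ast a)^s$ with $s = -\tfrac{m(m-1)}{2}$, which rewrites the additive multiple as a genuine product. Then~\eqref{dist_prod} gives
\[ (ma)\ast c = \bigl[a^m (a \ast a)^s\bigr]\ast c = a^m \ast c + (a \ast a)^s \ast c.\]

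The two summands are computed uniformly by expanding each multiplicative power as an iterated product and distributing via~\eqref{dist_prod}, using~\eqref{inv_ast_-} to absorb the sign when the exponent is negative: this yields $a^m \ast c = m(a \ast c)$ and $(a \ast a)^s \ast c = s\bigl[(a\ast a)\ast c\bigr]$. Substituting $s = -\tfrac{m(m-1)}{2}$ and multiplying through by $n$ produces the stated identity. The computation is essentially routine once these reductions are in place; the only points requiring care are the sign bookkeeping for negative $m$ and $n$ and the repeated passage between additive multiples and multiplicative powers, which is exactly what Lemma~\ref{lem:a^m=ma+a*a} together with~\eqref{B2inFix} is designed to license. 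I therefore expect no serious difficulty, only the need to apply these conversions consistently, since the two brace operations interact asymmetrically across the $\ast$-product.
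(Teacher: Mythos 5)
Your proof is correct and takes essentially the same route as the paper's: the paper's own (one-line) proof likewise reduces to $n=1$ via~\eqref{dist_sum} and then invokes Lemma~\ref{lem:a^m=ma+a*a} in its multiplicative form $ma = a^m(a\ast a)^s$ together with~\eqref{dist_prod} and~\eqref{inv_ast_-}. Your write-up is simply a faithful expansion of exactly those steps, with the sign bookkeeping for negative exponents made explicit.
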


\begin{proof}
By~\eqref{dist_sum}, it suffices to prove the corollary for $n = 1$. Then, the corollary follows after applying Lemma~\ref{lem:a^m=ma+a*a}, and the properites~\eqref{dist_prod} and~\eqref{inv_ast_-}.
\end{proof}

\begin{proof}[Proof of Theorem~\ref{teo:A}]
Call $S = L_1 + L_2 + L_3$ with
\begin{itemize}
\item $L_1 = \left\{\sum_{i=1}^r z_ib_i\mid z_i \in \mathbb{Z}, 1 \leq i \leq r\right\}$.
\item $L_2 = \left\{\sum_{i=1}^r z_{ij}b_{ij}\mid z_{ij} \in \mathbb{Z}, 1 \leq i,j \leq r\right\}\subseteq B^{2}$, where $b_{ij} = b_i \ast b_j$, for every $1\leq i, j\leq r$.
\item $L_3 = \left\{\sum_{i=1}^r z_{ijk}b_{ijk}\mid z_{ijk} \in \mathbb{Z}, 1 \leq i,j,k \leq r\right\} \subseteq B^{(3)}$, where $b_{ijk} = (b_i \ast b_j) \ast b_k$, for every $1\leq i, j, k\leq r$.
\end{itemize}
Clearly, $S$ is a subgroup of $(B,+)$ contained in the finitely generated brace $B = \langle b_1, \ldots, b_r \rangle$. Let us prove that $S$ is a subbrace of $B$. It suffices to show that $(S,\cdot)$ is a subgroup of $(B,\cdot)$.

Firstly, we claim that $b \ast b_k \in L_3$, for every $b \in L_2$ and every $1 \leq k \leq r$. If $b = \sum_{i=1}^r z_{ij}b_{ij}$ for some $z_{ij}\in \mathbb Z$,  $1 \leq i,j\leq r$, then we can write $b = \prod_{1\leq i,j\leq r} b_{ij}^{z_{ij}}$, as $L_2 \subseteq B^2$. Thus, by~\eqref{dist_prod},
\begin{align*}
b \ast b_k & = \left(\prod_{1\leq i,j\leq r} b_{ij}^{z_{ij}}\right) \ast b_k = \sum_{1\leq i,j\leq r} (z_{i,j}b_{i,j}) \ast b_k.
\end{align*}
Applying Corollary~\ref{cor:(ma)*(nc)}, 
\[ (z_{i,j}b_{i,j})\ast b_k = z_{i,j}(b_{i,j}\ast b_k) + \frac{z_{i,j}(z_{i,j}-1)}{2}(b_{i,j}\ast b_{i,j}) \ast b_k = z_{i,j}b_{ijk}\in L_3,\]
as $(b_{i,j}\ast b_{i,j}) \ast b_k \in B^{(4)} = 0$ by Theorem~\ref{teo:B3-B4}. Hence, $b \ast b_k \in L_3$ as we claimed.

Now, let $x = x_1 + x_2 + x_3, y = y_1 + y_2 + y_3 \in S$, with $x_i, y_i\in L_i$ for each $1\leq i \leq 3$. Thus, $c = x_2 + x_3, d = y_2 + y_3 \in B^2$. Then, applying~\eqref{dist_prod} and~\eqref{dist_sum}, it follows that
\begin{align}
x \ast y & = (x_1 + c) \ast (y_1 + d) = (x_1c) \ast (y_1 + d) = x_1 \ast y_1 + c \ast y_1, \label{xasty}
\end{align}
as $d\in B^2$. By the previous claim, $x_2 \ast y_1 \in L_3$, and $x_3 \ast y_1 \in B^{(4)} = 0$ by Theorem~\ref{teo:B3-B4}. Thus, by~\eqref{B2inFix} and~\eqref{dist_prod},
\[ c \ast  y_1 = (x_2 + x_3) \ast y_1 = (x_2x_3) \ast y_1 = x_2 \ast y_1\in L_3.\] 
It remains to show that $x_1\ast y_1 \in L_2 + L_3$.

Let $x_1 = \alpha_1b_1 + \ldots + \alpha_r b_r$, for some $\alpha_i \in \mathbb{Z}$ with $1\leq i \leq r$.  Clearly, by Corollary~\ref{cor:(ma)*(nc)}, it holds that
\begin{align*}
 (\alpha_i b_i) \ast b_j & = \alpha_i b_{ij} - \frac{\alpha_i(\alpha_i-1)}{2}b_{iij} \in L_2 + L_3
\end{align*}
for every $1\leq i,j\leq r$. Write $h =\alpha_1b_1 + \ldots + \alpha_{r-1}b_{r-1}$ and assume that $h \ast b_j \in L_2 + L_3$ for every $1\leq j \leq r$. By~\eqref{a+b=abast}, we have that $x_1 = h(\alpha_rb_r)(h\ast (\alpha_rb_r))^{-1}$. Thus, by~\eqref{dist_prod}, it follows that
\begin{align*}
x_1 \ast b_j & = \left(h(\alpha_rb_r)(h\ast (\alpha_rb_r))^{-1}\right)\! \ast b_j = h \ast b_j + (\alpha_rb_r)\! \ast b_j - (h \ast (\alpha_rb_r))\! \ast b_j.
\end{align*}
Since $h \ast (\alpha_rb_r) \in L_2 + L_3 \subseteq B^2$, we can write $h \ast (\alpha_rb_r) = l_2 + l_3 = l_2l_3$, with $l_2 \in L_2$ and $l_3 \in L_3$. Thus, 
\[ (h \ast (\alpha_rb_r)) \ast b_j = (l_2l_3) \ast b_j = l_2 \ast b_j + l_3 \ast b_j = l_2 \ast b_j \in L_3\]
as $l_3 \ast b_j \in B^{(4)} = 0$ by Theorem~\ref{teo:B3-B4}. Thus, $x_1 \ast b_j \in L_2 + L_3$ for every $1 \leq j \leq r$, and therefore, \eqref{dist_sum} yields  $x_1 \ast y_1 \in L_2 + L_3$.

Hence, we can conclude that $x \ast y = xy -x - y \in L_2 + L_3$. Thus, $xy \in S$.

Finally, it remains to show that for every $x \in S$, $x^{-1}\in S$. Let $x = x_1 + x_2 + x_3 \in S$. Call $c = x_2 + x_3 \in B^2$, and take $y = -x_1 + d$, where $d = -c + x_1 \ast x_1 + c \ast x_1 \in B^2$. Equation~\eqref{xasty} shows that
\[ x \ast y = x_1 \ast (-x_1) + c \ast (-x_1) = -(x_1 \ast x_1) -(c \ast x_1) = -c -d\]
On the other hand, 
\[ x \ast y = xy - x - y = xy -x_1 -c -(-x_1 + d) = xy -c -d.\]
Thus, $xy = 0$ and $y = x^{-1}$. Since we have seen that $x_1 \ast x_1, c \ast x_1 \in L_2 + L_3$, we have that $y = x^{-1}\in S$ as desired.
\end{proof}

\section{Proof of Theorem~\ref{teo:B}}
\label{teoB}
Throughout this section, assume that $B = \langle b_1, \ldots, b_r \rangle$, with $b_1, \ldots, b_r\in B$, is an $r$-generated left nilpotent brace of class at most~$2$. Recall that $b_{ij} = b_i \ast b_j$ for every $1\leq i,j\leq r$, and $b_{ijk} = (b_i \ast b_j) \ast b_k$ for every $1\leq i,j,k\leq r$. By Theorem~\ref{teo:A}, for an arbitrary $x \in B$, denote
\[ x = \sum_{1\leq i \leq r} x_i b_i + \sum_{1\leq i,j\leq r} x_{ij}b_{ij} + \sum_{1\leq i,j,k\leq r} x_{ijk}b_{ijk}\]
with $x_i, x_{jk}, x_{lmn} \in \mathbb{Z}$, for every $1\leq i,j,k,l,m,n\leq 2$.

We need the following lemmas.

\begin{lem}
\label{lem:dist_ni_bk}
Let $n_1, \ldots, n_m, z \in \mathbb{Z}$ with $1 \leq m \leq r$. Then, for every $1\leq k \leq r$,
\begin{align*}
\left(\sum_{1\leq i \leq m} n_ib_i\right) \ast(zb_k) & = \sum_{1 \leq i \leq m}\left(n_izb_{ik} - \frac{n_i(n_i-1)z}{2}b_{iik}\right) - \sum_{1\leq i < j \leq m} n_in_jzb_{ijk}
\end{align*}
\end{lem}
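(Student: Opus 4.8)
The plan is to argue by induction on $m$, the number of generators appearing in the left-hand sum; write $S_m = \sum_{i=1}^m n_i b_i$ for brevity. For the base case $m = 1$ the claimed identity reduces to $(n_1 b_1) \ast (z b_k) = n_1 z\, b_{1k} - \frac{n_1(n_1-1)z}{2} b_{11k}$, the double sum being empty, which is exactly Corollary~\ref{cor:(ma)*(nc)} applied with $a = b_1$ and $c = b_k$.

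For the inductive step the key device is to turn the additive expression $S_m = S_{m-1} + n_m b_m$ into a multiplicative one, so that the distributivity \eqref{dist_prod} becomes available. By \eqref{a+b=abast},
\[ S_m = S_{m-1}(n_m b_m)\bigl(S_{m-1}\ast(n_m b_m)\bigr)^{-1}, \]
and applying \eqref{dist_prod} together with \eqref{inv_ast_-} gives
\[ S_m \ast (zb_k) = S_{m-1}\ast(zb_k) + (n_m b_m)\ast(zb_k) - \bigl(S_{m-1}\ast(n_m b_m)\bigr)\ast(zb_k). \]
The first summand is supplied by the induction hypothesis, and the second is a single application of Corollary~\ref{cor:(ma)*(nc)}, contributing $n_m z\, b_{mk} - \frac{n_m(n_m-1)z}{2}b_{mmk}$.

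I expect the only genuine work to lie in the third, mixed term $\bigl(S_{m-1}\ast(n_m b_m)\bigr)\ast(zb_k)$. Here I would invoke the induction hypothesis once more, now with $z$ replaced by $n_m$ and $k$ by $m$, to expand $S_{m-1}\ast(n_m b_m)$ as an element of $L_2 + L_3$. The crucial observation is that its $L_3$-part lies in $B^{(3)}$, so after a further $\ast(zb_k)$ it falls into $B^{(4)} = 0$ by Theorem~\ref{teo:B3-B4} and may be discarded; only the $L_2$-part $\sum_{i<m} n_i n_m\, b_{im}$ survives. Since products and sums agree on $B^2$ by \eqref{B2inFix}, this part equals the product $\prod_{i<m} b_{im}^{\,n_i n_m}$, and \eqref{dist_prod} followed by Corollary~\ref{cor:(ma)*(nc)} turns it into $\sum_{i<m} n_i n_m z\, b_{imk}$, the quadratic correction $(b_{im}\ast b_{im})\ast b_k$ again vanishing since it lies in $B^{(4)} = 0$.

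Finally I would reassemble the three pieces. Combining the second summand with the single sum from the induction hypothesis yields $\sum_{i\le m}\bigl(n_i z\, b_{ik} - \frac{n_i(n_i-1)z}{2} b_{iik}\bigr)$, while subtracting the mixed term merges $-\sum_{i<j<m} n_i n_j z\, b_{ijk}$ with $-\sum_{i<m} n_i n_m z\, b_{imk}$ into the single sum $-\sum_{i<j\le m} n_i n_j z\, b_{ijk}$, which is precisely the asserted formula for $m$. The main obstacle is the bookkeeping in the mixed term: tracking which contributions vanish in $B^{(4)} = 0$ and verifying that the surviving indices recombine, with the correct signs, into the range $i < j \le m$.
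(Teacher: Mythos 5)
Your proposal is correct and follows essentially the same route as the paper: the base case via Corollary~\ref{cor:(ma)*(nc)}, the multiplicative rewriting of $S_{m-1}+n_mb_m$ through~\eqref{a+b=abast} and~\eqref{dist_prod}, and the treatment of the mixed term by applying the induction hypothesis with $z\mapsto n_m$, $k\mapsto m$ and discarding the $B^{(3)}$-part since $B^{(4)}=0$ by Theorem~\ref{teo:B3-B4}. The paper's proof performs exactly this bookkeeping, so no comparison beyond this is needed.
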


\begin{proof}
The case $m = 1$ is given by Corollary~\ref{cor:(ma)*(nc)}. Assume true the formula for some $1 \leq m-1 < r$. Applying~\eqref{a+b=abast} and~\eqref{dist_prod}, we see that
{\small
\begin{align*}
\left(\sum_{1\leq i \leq m} n_ib_i\right) \ast(zb_k) & = \left(\sum_{1\leq i \leq m-1} n_ib_i\right) \ast(zb_k) + (n_mb_m) \ast (zb_k) \ - \\
& - \left(\left(\sum_{1\leq i \leq m-1} n_ib_i\right) \ast (n_mb_m)\right) \ast (zb_k)
\end{align*}
}
By induction hypothesis, it holds that
{\small
\begin{align}
\!\left(\sum_{1\leq i \leq m-1} n_ib_i\right)\! \ast\!(zb_k) =\! \sum_{1 \leq i \leq m-1}\!\left(\!n_izb_{ik} - \frac{n_i(n_i-1)z}{2}b_{iik}\!\right)\! -\! \sum_{1\leq i < j \leq m-1} n_in_jzb_{ijk} \label{eq:hypind}
\end{align}
}
Moreover, Corollary~\ref{cor:(ma)*(nc)} yields
\begin{equation}
(n_mb_m) \ast (zb_k) = n_mzb_{mk} - \frac{n_m(n_m-1)z}{2}b_{mmk} \label{eq:(nmbm)*(zbk)}
\end{equation}
Once again, by induction hypothesis, we have that
\[ \left(\sum_{1\leq i \leq m-1} n_ib_i\right) \ast (n_mb_m) = \sum_{1 \leq i \leq m-1} n_in_mb_{im} + d\]
for some $d \in B^{(3)}$. Thus, by~\eqref{B2inFix} and~\eqref{dist_prod},
\begin{align}
\left(\left(\sum_{1\leq i \leq m-1} n_ib_i\right) \ast (n_mb_m)\right) \ast (zb_k) & = \left(\sum_{1 \leq i \leq m-1} n_in_mb_{im} + d\right) \ast (zb_k)  \nonumber \\
& = \sum_{1\leq i \leq m-1} n_in_mzb_{imk} \label{eq:signe-}
\end{align}
as $d \ast(zb_k) = 0$ because $B^{(4)} = 0$ by Theorem~\ref{teo:B3-B4}. Hence, the formula for $m$ holds after considering the sum of equations: \eqref{eq:hypind}+\eqref{eq:(nmbm)*(zbk)}-\eqref{eq:signe-}.
\end{proof}

\begin{lem}
\label{lem:producte_2-gen}
For every $x, y \in B$, if we write $z = xy$ then it holds that
\[
\begin{array}{ll}
z_i = x_i + y_i, & \text{for each $1 \leq i \leq r$;}\\
z_{ij} = x_{ij} + y_{ij} + x_iy_j, & \text{for each $1 \leq i,j\leq r$;}\\ 
z_{iij} = x_{iij} + y_{iij} + (x_{ii} - \frac{x_i(x_i-1)}{2})y_j, & \text{for each $1 \leq i,j\leq r$;}\\
z_{ijk} = x_{ijk}\!+\! y_{ijk} + (x_{ij} - x_ix_j)y_k, & \text{for each $1 \leq i < j \leq r$, $1 \leq k \leq r$;}\\
z_{ijk} = x_{ijk}\!+\! y_{ijk} + x_{ij}y_k, & \text{for each $1 \leq j < i \leq r$, $1 \leq k \leq r$.}
\end{array}\]
\end{lem}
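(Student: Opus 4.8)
The plan is to expand the product through the star operation and read off coordinates term by term, using $xy = x + y + x\ast y$. Since addition in the coordinate representation is componentwise, the summand $x+y$ contributes $x_\alpha + y_\alpha$ to each coordinate $\alpha$, so the identity $xy = x+y+x\ast y$ already exhibits $z=xy$ in the required form and the whole problem reduces to computing the coordinates of $x\ast y$. Because every star product lies in $B^2$, the element $x\ast y$ has no $b_i$-component; this immediately gives $z_i = x_i+y_i$ and leaves only the $L_2$- and $L_3$-coordinates to determine.

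Next I would reduce $x\ast y$ exactly as in the proof of Theorem~\ref{teo:A}. Writing $x = x_1 + x_2 + x_3$ and $y = y_1 + y_2 + y_3$ with $x_t, y_t \in L_t$, and setting $c = x_2 + x_3$, $d = y_2 + y_3 \in B^2$, the computation in~\eqref{xasty} together with $B^3 = 0$ and $B^{(4)} = 0$ gives $x \ast y = x_1 \ast y_1 + x_2 \ast y_1$, since the terms $x_1 \ast d$ and $c\ast d$ lie in $B^3 = 0$ and $x_3 \ast y_1 \in B^{(4)} = 0$. It then remains to compute these two star products in coordinates.

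For the first, since $x_2 \in B^2$ I would write it as $\prod_{i,j} b_{ij}^{\,x_{ij}}$ and apply~\eqref{dist_prod} together with Corollary~\ref{cor:(ma)*(nc)}; the quadratic correction terms involve $(b_{ij}\ast b_{ij})\ast b_k \in B^{(4)} = 0$, so one obtains the clean expression $x_2 \ast y_1 = \sum_{i,j,k} x_{ij} y_k\, b_{ijk}$. For the second, I would expand $x_1 \ast y_1 = \sum_k x_1 \ast (y_k b_k)$ using~\eqref{dist_sum} and apply Lemma~\ref{lem:dist_ni_bk} with $m = r$ to each summand, yielding
\[ x_1 \ast y_1 = \sum_{i,k} x_i y_k\, b_{ik} \;-\; \sum_{i,k}\frac{x_i(x_i-1)}{2}\,y_k\, b_{iik} \;-\; \sum_{i<j,\,k} x_i x_j y_k\, b_{ijk}. \]

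The final step is to collect coefficients, where the crucial structural point is the asymmetry between the two contributions: Lemma~\ref{lem:dist_ni_bk} produces only $b_{ijk}$-terms with $i \leq j$ (the diagonal $b_{iik}$ and the strictly ascending $i<j$), whereas $x_2 \ast y_1$ produces \emph{every} $b_{ijk}$, including those with $j < i$. Adding the $b_{ij}$-coefficient $x_i y_j$ to the additive part gives $z_{ij}$; combining the $b_{iik}$-contributions $x_{ii} y_j$ (from $x_2 \ast y_1$) and $-\tfrac{x_i(x_i-1)}{2}y_j$ (from $x_1 \ast y_1$) gives $z_{iij}$; for $i < j$ the contributions $x_{ij} y_k$ and $-x_i x_j y_k$ merge into $(x_{ij} - x_i x_j)y_k$; and for $j < i$ only $x_{ij} y_k$ survives. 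I expect the main source of difficulty to be purely the index bookkeeping---keeping the diagonal, strictly ascending, and strictly descending patterns separate so that the two star products are merged correctly---rather than any conceptual obstacle.
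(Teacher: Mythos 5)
Your proposal is correct and follows essentially the same route as the paper's proof: the same reduction of $x \ast y$ to $x_1 \ast y_1 + x_2 \ast y_1$ via equation~\eqref{xasty} and $B^{(4)} = 0$, the same computation of $x_2 \ast y_1$ through~\eqref{B2inFix}, \eqref{dist_prod} and the vanishing of $(b_{ij} \ast b_{ij}) \ast b_k$, and the same application of Lemma~\ref{lem:dist_ni_bk} (with $m = r$) to $x_1 \ast y_1$, followed by the identical coefficient bookkeeping separating the diagonal, ascending, and descending index patterns. No gaps; your collection of coefficients reproduces the paper's formulas exactly.
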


\begin{proof}
Set
\[ c = \sum_{1\leq i,j\leq r} x_{ij}b_{ij} \in B^2,\ d = \sum_{1\leq i,j,k\leq r} x_{ijk}b_{ijk} \in B^{(3)}.\]
Equation~\eqref{xasty} in the proof of Theorem~\ref{teo:A} yields
\[ x \ast y = \left(\sum_{1\leq i \leq r} x_ib_i\right) \ast \left(\sum_{1\leq i \leq r} y_ib_i\right) + (c+d) \ast \left(\sum_{1\leq i \leq r} y_ib_i\right)\]
Applying Lemma~\ref{lem:dist_ni_bk}, for every $1 \leq k \leq r$ it holds that
{\small
\begin{align*}
\left(\sum_{1\leq i \leq m} x_ib_i\right) \ast(y_kb_k) & = \sum_{1 \leq i \leq m}\left(x_iy_kb_{ik} - \frac{x_i(x_i-1)y_k}{2}b_{iik}\right) - \sum_{1\leq i < j \leq m} x_ix_jy_kb_{ijk}
\end{align*}
}
On the other hand, since $d \in B^{(3)}$ and $B^{(4)} = 0$, it holds that $(c+d) \ast \left(\sum_{1\leq i \leq r} y_ib_i\right) = c \ast \left(\sum_{1\leq i \leq r} y_ib_i\right)$. Then, by~\eqref{B2inFix}, \eqref{dist_prod} and~\eqref{dist_sum}, we see that
\begin{align*}
c \ast (y_kb_k) & =  \sum_{1\leq i,j\leq r} x_{ij}y_kb_{ijk}, \quad \text{for every $1\leq k \leq r$.}
\end{align*}
If we call $u = x \ast y$, we have seen that $u_1 = \ldots = u_r = 0$, and from the above we get that
\[
\begin{array}{ll}
u_{ij} = x_iy_j, & \text{for every $1\leq i, j \leq r$;}\\
u_{iij} = \left(x_{ii} - \frac{x_i(x_i-1)}{2}\right)y_j, & \text{for every $1\leq i, j \leq r$;}\\
u_{ijk} = (x_{ij} -x_ix_j)y_k, & \text{for every $1\leq i < j \leq r$, $1\leq k \leq r$;}\\
u_{ijk} = x_{ij}y_k, & \text{for every $1\leq j < i \leq r$, $1\leq k \leq r$.}
\end{array}
\]
Hence, the lemma holds as $xy = x + y + x\ast y$.
\end{proof}

\begin{proof}[Proof of Theorem~\ref{teo:B}]
We show that the associative law in $(B,\cdot)$ implies $(b_i\ast b_j) \ast b_k = (b_j \ast b_i) \ast b_k$ for every $1\leq i < j \leq r$ and every $1\leq k \leq r$.

Call $u = xy$, $v = yz$, $s = (xy)z = uz$ and $t = x(yz) = xv$. According to Lemma~\ref{lem:producte_2-gen}, we write
\[
\begin{array}{ll}
u_i = x_i + y_i, & \text{for each $1 \leq i \leq r$;}\\
u_{ij} = x_{ij} + y_{ij} + x_iy_j, & \text{for each $1 \leq i,j\leq r$;}\\ 
u_{iij} = x_{iij} + y_{iij} + (x_{ii} - \frac{x_i(x_i-1)}{2})y_j, & \text{for each $1 \leq i,j\leq r$;}\\
u_{ijk} = x_{ijk}\!+\! y_{ijk} + (x_{ij} - x_ix_j)y_k, & \text{for each $1 \leq i < j \leq r$, $1 \leq k \leq r$;}\\
u_{ijk} = x_{ijk}\!+\! y_{ijk} + x_{ij}y_k, & \text{for each $1 \leq j < i \leq r$, $1 \leq k \leq r$.}
\end{array}\]
\[
\begin{array}{ll}
v_i = y_i + z_i, & \text{for each $1 \leq i \leq r$;}\\
v_{ij} = y_{ij} + z_{ij} + y_iz_j, & \text{for each $1 \leq i,j\leq r$;}\\ 
v_{iij} = y_{iij} + z_{iij} + (y_{ii} - \frac{y_i(y_i-1)}{2})z_j, & \text{for each $1 \leq i,j\leq r$;}\\
v_{ijk} = y_{ijk}\!+\! z_{ijk} + (y_{ij} - y_iy_j)z_k, & \text{for each $1 \leq i < j \leq r$, $1 \leq k \leq r$;}\\
v_{ijk} = y_{ijk}\!+\! z_{ijk} + y_{ij}z_k, & \text{for each $1 \leq j < i \leq r$, $1 \leq k \leq r$.}
\end{array}\]
\[
\begin{array}{ll}
s_i = u_i + y_i, & \text{for each $1 \leq i \leq r$;}\\
s_{ij} = u_{ij} + z_{ij} + u_iz_j, & \text{for each $1 \leq i,j\leq r$;}\\ 
s_{iij} = u_{iij} + z_{iij} + (u_{ii} - \frac{u_i(u_i-1)}{2})z_j, & \text{for each $1 \leq i,j\leq r$;}\\
s_{ijk} = u_{ijk}\!+\! z_{ijk} + (u_{ij} - u_iu_j)z_k, & \text{for each $1 \leq i < j \leq r$, $1 \leq k \leq r$;}\\
s_{ijk} = u_{ijk}\!+\! z_{ijk} + u_{ij}z_k, & \text{for each $1 \leq j < i \leq r$, $1 \leq k \leq r$.}
\end{array}\]
and
\[
\begin{array}{ll}
t_i = x_i + v_i, & \text{for each $1 \leq i \leq r$;}\\
t_{ij} = x_{ij} + v_{ij} + x_iv_j, & \text{for each $1 \leq i,j\leq r$;}\\ 
t_{iij} = x_{iij} + v_{iij} + (x_{ii} - \frac{x_i(x_i-1)}{2})v_j, & \text{for each $1 \leq i,j\leq r$;}\\
t_{ijk} = x_{ijk}\!+\! v_{ijk} + (x_{ij} - x_ix_j)v_k, & \text{for each $1 \leq i < j \leq r$, $1 \leq k \leq r$;}\\
t_{ijk} = x_{ijk}\!+\! v_{ijk} + x_{ij}v_k, & \text{for each $1 \leq j < i \leq r$, $1 \leq k \leq r$.}
\end{array}\]
Clearly, $s_i = t_i$ holds for every $1\leq i \leq r$. It also holds that
\begin{align*}
s_{ij} & = u_{ij} + z_{ij} + u_iz_j = x_{ij} + y_{ij} + x_iy_j + z_{ij} + (x_i+y_i)z_j = \\
& =  x_{ij} + y_{ij} + z_{ij} + x_iy_j + x_iz_j + y_iz_j;\\
t_{ij} & = x_{ij} + v_{ij} + x_iv_j = x_{ij} + y_{ij} + z_{ij} + y_iz_j + x_i(y_j + z_j) = \\
& = x_{ij} + y_{ij} + z_{ij} + x_iy_j + x_iz_j + y_iz_j.
\end{align*}
Therefore, $s_{ij} = t_{ij}$ for every  $1 \leq i,j \leq r$.

\medskip

Now, we see that
{\small
\begin{align*}
s_{iij} & = u_{iij} + z_{iij} +\! \left(u_{ii} - \frac{u_i(u_i-1)}{2}\right)\!z_j = x_{iij} + y_{iij} +\! \left(x_{ii} - \frac{x_i(x_i-1)}{2}\right)\!y_j \\
& + z_{iij} +\! \left(x_{ii} + y_{ii} + x_iy_i - \frac{(x_i+y_i)(x_i + y_i - 1)}{2}\right)\!z_j = \\
& = x_{iij} + y_{iij} + z_{iij} + x_{ii}y_j - \frac{x_i(x_i-1)}{2}y_j + \\
& + \!\left(x_{ii} + y_{ii} + x_iy_j - \frac{x_i(x_i-1)}{2} - \frac{y_i(y_i-1)}{2} - x_iy_i\right)\!z_j\\
& = x_{iij} + y_{iij} + z_{iij} + x_{ii}y_j + x_{ii}z_j + y_{ii}z_j \\
& - \frac{x_i(x_i-1)}{2}y_j - \frac{x_i(x_i-1)}{2}z_j - \frac{y_i(y_i-1)}{2}z_j;\\
t_{iij} & = x_{iij} + v_{iij} +\! \left(x_{ii} - \frac{x_i(x_i-1)}{2}\right)\!v_j = x_{iij} + y_{iij} + z_{iij} \\
& + \! \left(y_{ii} - \frac{y_i(y_i-1)}{2}\right)\!z_j + \!
\left(x_{ii} - \frac{x_i(x_i-1)}{2}\right)\!(y_j + z_j).
\end{align*}
}
Therefore, $s_{iij} = t_{iij}$ for every $1\leq i,j\leq r$.

Now, for every $1\leq i < j \leq r$ and every $1\leq k \leq r$, it holds that
\small{
\begin{align*}
s_{ijk} & = u_{ijk} + z_{ijk} +\! (u_{ij} - u_iu_j)z_k = x_{ijk} + y_{ijk}\! +\! (x_{ij} - x_ix_j)y_k + z_{ijk} \\
& +\!\big(x_{ij} + y_{ij} + x_iy_j - (x_i+y_i)(x_j + y_j)\big)z_k \\
& = x_{ijk} + y_{ijk} + z_{ijk} + x_{ij}y_k + x_{ij}z_k + y_{ij}z_k \\
& - x_ix_jy_k -x_ix_jz_k - y_ix_jz_k - y_iy_jz_k
\end{align*}
}
and 
\small{
\begin{align*}
t_{ijk} & = x_{ijk} + v_{ijk} +\! (x_{ij} - x_ix_j)v_k \\
& = x_{ijk} + y_{ijk} + z_{ijk} +\! (y_{ij} - y_iy_j)z_k +\! (x_{ij} - x_ix_j)(y_k + z_k) \\
& = x_{ijk} + y_{ijk} + z_{ijk} + x_{ij}y_k + x_{ij}z_k + y_{ij}z_k - x_ix_jy_k - x_ix_jz_k - y_iy_jz_k.
\end{align*}
}
Thus, $s_{ijk} - t_{ijk} = x_jy_iz_k$. On the other hand, for every $1\leq j < i \leq r$ and every $1 \leq k \leq r$, it holds that
\small{
\begin{align*}
s_{ijk} & = u_{ijk} + z_{ijk} + u_{ij}z_k = x_{ijk} + y_{ijk}\! + x_{ij}y_k +z_{ijk} + (x_{ij} + y_{ij} + x_iy_j)z_k \\
& = x_{ijk} + y_{ijk} + z_{ijk} + x_{ij}y_k + x_{ij}z_k + y_{ij}z_k + x_iy_jz_k
\end{align*}
}
and
\small{
\begin{align*}
t_{ijk} & = x_{ijk} + v_{ijk} + x_{ij}v_k = x_{ijk} +
y_{ijk} + z_{ijk} + y_{ij}z_k + x_{jk}(y_k + z_k) \\
& = x_{ijk} + y_{ijk} + z_{ijk} + x_{ij}y_k + x_{ij}z_k + y_{ij}z_k
\end{align*}
}
Therefore, $s_{ijk} - t_{ijk} = x_iy_jz_k$. Hence, the associative law in the multiplicative group yields
\[ \sum_{1\leq i < j \leq r, 1\leq k \leq r} x_jy_iz_k(b_{ijk} - b_{jik}) 
 = 0\]
for every $x_j,y_i,z_k \in \mathbb{Z}$. Fix $1 \leq k \leq r$. If we take $x_j = y_i = 1$, $z_k = 1$, and $z_{k'} = 0$ for every $1\leq k\neq k' \leq r$,  it follows that $b_{ijk} = b_{jik}$.
\end{proof}

\begin{nota}
\label{nota:nou-producte2gen}
From Theorem~\ref{teo:B}, every element $x \in B$ can be written as
\[ x = \sum_{1\leq i \leq r} x_i b_i + \sum_{1\leq i,j\leq r} x_{ij}b_{ij} + \sum_{1\leq i<j\leq r, 1 \leq k \leq r} x_{ijk}b_{ijk}\]
Moreover, for every $x,y\in B$, if we write $z = xy$ then Lemma~\ref{lem:producte_2-gen} shows that
{\small
\[
\begin{array}{ll}
z_i = x_i + y_i, & \text{for each $1 \leq i \leq r$;}\\
z_{ij} = x_{ij} + y_{ij} + x_iy_j, & \text{for each $1 \leq i,j\leq r$;}\\ 
z_{iij} = x_{iij} + y_{iij} + \left(x_{ii} - \frac{x_i(x_i-1)}{2}\right)y_j, & \text{for each $1 \leq i,j\leq r$;}\\
z_{ijk} = x_{ijk}\!+\! y_{ijk} + (x_{ij} - x_ix_j + x_{ji})y_k, & \text{for each $1 \leq i < j \leq r$, $1 \leq k \leq r$.}
\end{array}\]
}
In particular, for every $x \in B$, if we write $z = x^{-1}$ then it holds that
{\small
\[
\begin{array}{ll}
z_i = -x_i, & \text{for each $1\leq i \leq r$;} \\
z_{ij} = x_ix_j -x_{ij}, & \text{for each $1\leq i, j \leq r$;} \\
z_{iij} = \left(x_{ii} - \frac{x_i(x_i-1)}{2}\right)x_j - x_{iij}, & \text{for each $1\leq i, j \leq r$;}\\
z_{ijk} = (x_{ij} - x_ix_j + x_{ji})x_k - x_{ijk}, & \text{for each $1\leq i < j \leq r$, $1\leq k \leq r$.}
\end{array}\]
}
\end{nota}

\section{Proof of Theorem~\ref{teo:freergen}}
\begin{proof}[Proof of Theorem~\ref{teo:freergen}]
The associativity of the product in $D$ follows by similar arguments as in the proof of Theorem~\ref{teo:B} (see also Remark~\ref{nota:nou-producte2gen}). Thus, $(D,\cdot)$ is a group, whose identity element is $\mathbf{0} \in D$. Moreover, if $\mathbf{d} \in D$ then $\mathbf{f} = \mathbf{d}^{-1}\in D$ is given by
{\small
\[
\begin{array}{ll}
f_i = -d_i, & \text{for each $1\leq i \leq r$;} \\
f_{ij} = d_id_j -d_{ij}, & \text{for each $1\leq i, j \leq r$;} \\
f_{iij} = \left(d_{ii} - \frac{d_i(d_i-1)}{2}\right)d_j - d_{iij}, & \text{for each $1\leq i, j \leq r$;}\\
f_{ijk} = (d_{ij} - d_id_j + d_{ji})d_k - d_{ijk}, & \text{for each $1\leq i < j \leq r$, $1\leq k \leq r$.}
\end{array}\]
}
Now, let us see that $(D,+,\cdot)$ satisfies the brace distributivity property. Take $\mathbf{x},\mathbf{y}, \mathbf{z} \in D$. Call $\mathbf{u} = \mathbf{xy}$, $\mathbf{v} = \mathbf{xz}$, and $\mathbf{s} = \mathbf{x(y+z)}$. Then, it follows that
{\small
\[
\begin{array}{l}
\begin{array}{ll}
s_i = x_i + y_i + z_i, & \text{for each $1\leq i \leq r$;}\\
s_{ij} = x_{ij} + y_{ij} + x_i(y_j+z_j), & \text{for each $1\leq i,j  \leq r$;} \\
s_{iij} = x_{iij} + y_{iij} + z_{iij} +\!(x_{ii} - \frac{x_i(x_i-1)}{2})(y_j+z_j), &  \text{for each $1\leq i,j  \leq r$;} \\
\end{array}\\
\begin{array}{ll}
s_{ijk} =  x_{ijk} + y_{ijk} + z_{ijk} + \! (x_{ij} - x_ix_j + x_{ji})(y_k + z_k), & \text{for each $1\leq i < j \leq r$,}\\
& 1\leq k \leq r.
\end{array}
\end{array}\]
}
On the other hand, we also have that
\[
\begin{array}{ll}
u_i = x_i + y_i, & \text{for each $1 \leq i \leq r$;}\\
u_{ij} = x_{ij} + y_{ij} + x_iy_j, & \text{for each $1 \leq i,j\leq r$;}\\ 
u_{iij} = x_{iij} + y_{iij} + \left(x_{ii} - \frac{x_i(x_i-1)}{2}\right)y_j, & \text{for each $1 \leq i,j\leq r$;}\\
u_{ijk} = x_{ijk}\!+\! y_{ijk}\! +\!(x_{ij} - x_ix_j + x_{ji})y_k, & \text{for each $1 \leq i < j \leq r$, $1 \leq k \leq r$.}
\end{array}\]
and analogously
\[
\begin{array}{ll}
v_i = x_i + z_i, & \text{for each $1 \leq i \leq r$;}\\
v_{ij} = x_{ij} + z_{ij} + x_iz_j, & \text{for each $1 \leq i,j\leq r$;}\\ 
v_{iij} = x_{iij} + z_{iij} + \left(x_{ii} - \frac{x_i(x_i-1)}{2}\right)z_j, & \text{for each $1 \leq i,j\leq r$;}\\
v_{ijk} = x_{ijk}\!+\! z_{ijk}\! +\!(x_{ij} - x_ix_j + x_{ji})z_k, & \text{for each $1 \leq i < j \leq r$, $1 \leq k \leq r$.}
\end{array}\]
If we call $\mathbf{t} = \mathbf{xy} - \mathbf{x} + \mathbf{xz} = \mathbf{u} - \mathbf{x} + \mathbf{v}$, we obtain
{\small
\begin{align*}
t_i & = u_i - x_i + v_i = x_i + y_i + z_i = s_i,\quad  \text{for each $1 \leq i \leq r$;}
\end{align*}
\begin{align*}
t_{ij} & = u_{ij} - x_{ij} + v_{ij} = x_{ij} + y_{ij} + x_iy_j - x_{ij} + x_{ij} + z_{ij} + x_iz_j \\
& = x_{ij} + y_{ij} + z_{ij} +x_iy_j + x_iz_j = s_{ij},\quad  \text{for each $1 \leq i,j\leq r$;}
\end{align*}
\begin{align*}
t_{iij} & = u_{iij} - x_{iij} + v_{iij} = x_{iij} + y_{iij} +\!\text{$\left(x_{ii} - \frac{x_i(x_i-1)}{2}\right)$}y_j - x_{iij} + x_{iij} + z_{iij}\,+\\
& \, + \!\text{$\left(x_{ii} - \frac{x_i(x_i-1)}{2}\right)$}z_j = x_{iij} + y_{iij} + z_{iij} +\!\text{$\left(x_{ii} - \frac{x_i(x_i-1)}{2}\right)$}(y_j+z_j) \\
& = s_{iij},\quad \text{for each $1 \leq i,j\leq r$;}
\end{align*}
\begin{align*}
t_{ijk} & = u_{ijk} - x_{ijk} + v_{ijk} = x_{ijk} + y_{ijk} +\!(x_{ij} - x_ix_j + x_{ji})y_k - x_{ijk} + x_{ijk} \,+\\
&\, +\, z_{ijk} +\! (x_{ij} - x_ix_j + x_{ji})z_k = x_{ijk} + y_{ijk} + z_{ijk} + \! (x_{ij} - x_ix_j + x_{ji})(y_k + z_k)\\
& = s_{ijk}, \quad \text{for each $1\leq i < j \leq r$, $1\leq k \leq r$.}\\
\end{align*}}
Hence, $(D,+,\cdot)$ satisfies the brace distributivity property, and therefore, it is a brace. 

Clearly, $D$ is finitely generated as it is finitely additively generated by the elements of $X$. For every $\mathbf{x}_\alpha \in X$, and every index $\alpha'$ of the alphabet $X$, we denote $(x_\alpha)_{\alpha'}$ the $\alpha'$ index of $\mathbf{x}_\alpha$. Clearly, it holds that $(x_\alpha)_{\alpha'} = 1$ if $\alpha = \alpha'$, and $(x_\alpha)_{\alpha'} = 0$ otherwise.

From the product formula, it is a routine to check
\[
\mathbf{x}_i \ast \mathbf{x}_j  = -\mathbf{x}_i + \mathbf{x}_i\mathbf{x}_j - \mathbf{x}_j = \mathbf{x}_{ij},\quad \text{for each $1 \leq i,j\leq r$.}\]
Fix $1\leq i,j \leq r$ and take $\mathbf{u} = (\mathbf{x}_i \ast \mathbf{x}_i) \ast \mathbf{x}_j = \mathbf{x}_{ii} \ast \mathbf{x}_j  = -\mathbf{x}_{ii} + \mathbf{x}_{ii}\mathbf{x}_j - \mathbf{x}_j$ and $\mathbf{v} = \mathbf{x}_{ii}\mathbf{x}_j$. Observe that
\begin{align*}
v_{ii} & = (x_{ii})_{ii} + (x_j)_{ii} + (x_{ii})_i(x_j)_i = 1 + 0 + 0 = 1\\
v_{j} & = (x_{ii})_j + (x_j)_j  = 0 + 1 = 1\\
v_{iij} & = (x_{ii})_{iij}\! +\! (x_j)_{iij}\! +\! \left((x_{ii})_{ii}\! +\! \frac{(x_{ii})_i\big((x_{ii})_i -1\big)}{2}\right)\!(x_j)_j\! = 0 + 0 + 1 + 0 = 1
\end{align*}
Similarly, we can check that if $\alpha \notin \{ii, j, iij\}$, then $v_\alpha = 0$. Thus, it follows that
\begin{align*}
u_{ii} & = -(x_{ii})_{ii} + v_{ii} - (x_j)_{ii} = -1 + 1 + 0 = 0,\\
u_{j} & = -(x_{ii})_j + v_j - (x_j)_j = 0 + 1 - 1 = 0,\\
u_{iij} & = -(x_{ii})_{iij} + v_{iij} - (x_j)_{iij} = 0 + 1 + 0 = 1,
\end{align*}
and $u_\alpha = -(x_{ii})_\alpha + v_\alpha - (x_j)_\alpha = 0$ for every $\alpha \notin \{ii, j, iij\}$. 

Therefore, $(\mathbf{x}_i \ast \mathbf{x}_i) \ast \mathbf{x}_j =\mathbf{x}_{iij}$ for every $1\leq i,j \leq r$.

Fix $1 \leq i < j \leq r$ and $1 \leq k \leq r$, and take $\mathbf{u} = (\mathbf{x}_i \ast \mathbf{x}_j) \ast \mathbf{x}_k = \mathbf{x}_{ij} \ast \mathbf{x}_k  = -\mathbf{x}_{ij} + \mathbf{x}_{ij}\mathbf{x}_k - \mathbf{x}_k$ and $\mathbf{v} = \mathbf{x}_{ij}\mathbf{x}_k$. Observe that
\begin{align*}
v_{ij} & = (x_{ij})_{ij} + (x_k)_{ij} + (x_{ij})_i(x_k)_j = 1 + 0 + 0 = 1\\
v_{k} & = (x_{ij})_k + (x_k)_k  = 0 + 1 = 1\\
v_{ijk} & = (x_{ij})_{ijk}\! +\! (x_k)_{ijk}\! +\! \big((x_{ij})_{ij}\! +\! (x_{ij})_i(x_{ij})_j \!+\! (x_{ij})_{ji}\big)(x_k)_k\! = 0 + 0 + 1 = 1
\end{align*}
Similarly, we can check that if $\alpha \notin \{ij, k, ijk\}$, then $v_\alpha = 0$. Thus, it follows that
\begin{align*}
u_{ij} & = -(x_{ij})_{ij} + v_{ij} - (x_k)_{ij} = -1 + 1 + 0 = 0,\\
u_{k} & = -(x_{ij})_k + v_k - (x_k)_k = 0 + 1 - 1 = 0,\\
u_{ijk} & = -(x_{ij})_{ijk} + v_{ijk} - (x_k)_{ijk} = 0 + 1 + 0 = 1,
\end{align*}
and $u_\alpha = -(x_{ij})_\alpha + v_\alpha - (x_k)_\alpha = 0$ for every $\alpha \notin \{ij, k, ijk\}$.

Therefore, $(\mathbf{x}_i \ast \mathbf{x}_j) \ast \mathbf{x}_k =\mathbf{x}_{ijk}$ for every $1\leq i,j \leq r$, and every $1\leq k \leq r$.

Hence, we conclude that $D = \langle \mathbf{x}_1, \ldots, \mathbf{x}_r\rangle$ is $r$-generated.

\medskip

Now, take $\mathbf{d} \in D$ and $\mathbf{x}_\alpha \in D$. Call $\mathbf{u} = \mathbf{d}\ast \mathbf{x}_\alpha = -\mathbf{d} + \mathbf{dx}_\alpha - \mathbf{x}_\alpha$, and $\mathbf{v} = \mathbf{dx}_\alpha$. For every $1\leq i \leq r$, it holds that $v_i = d_i + (x_\alpha)_i$. Thus, it follows that $u_i = -d_i + d_i + (x_\alpha)_i - (x_\alpha)_i = 0$, for every $1\leq i \leq r$. Therefore, $D^2 = \langle \mathbf{x}_\alpha\mid \alpha \notin \{1,\ldots, r\}\rangle$. 

Take $\mathbf{d} \in D$ and $\mathbf{x}_\alpha \in D$ with $\alpha \notin \{1,\ldots, r\}$. As before, $\mathbf{u} = \mathbf{d}\ast \mathbf{x}_\alpha = -\mathbf{d} + \mathbf{dx}_\alpha - \mathbf{x}_\alpha$, and $\mathbf{v} = \mathbf{dx}_\alpha$. Since $\alpha \notin \{1,\ldots, r\}$, from the product formula it follows that $v_{\alpha'} = d_{\alpha'} + (x_\alpha)_{\alpha'}$ for every index $\alpha'$ of the alphabet $X$. Thus,
\[ u_{\alpha'} = -d_{\alpha'} +d_{\alpha'} + (x_\alpha)_{\alpha'} = 0, \text{for every index $\alpha'$ of $X$.}\]
Applying equation~\eqref{dist_sum}, this means that $\mathbf{d}\ast \mathbf{e} = 0$ for every $e \in D^2$. Hence, $D^3 = D \ast (D\ast D) = 0$, and therefore, $D^{(4)} = 0$ by Theorem~\ref{teo:B3-B4}.

\medskip

Finally, let $C$ be a brace with $C^3 = 0$ and such that $C = \langle c_1, \ldots, c_r\rangle$ is $r$-generated. By Theorem~\ref{teo:B}, every element $x \in C$ can be written as a sum
\[ x = \sum_{1\leq i \leq r} x_i c_i + \sum_{1\leq i,j\leq r} x_{ij}c_{ij} + \sum_{1\leq i<j\leq r, 1 \leq k \leq r} x_{ijk}c_{ijk}\]
where $c_{ij} = c_i \ast c_j$ and $c_{klm} = (c_k\ast c_l) \ast c_m$, for every $1\leq i,j\leq r$, and every $1\leq k < l \leq r$, $1\leq m\leq r$.
Therefore, the map $\varphi\colon D \rightarrow B$ sending each $d \in D$ with
\[ d = \sum_{1\leq i \leq r} d_i \mathbf{x}_i + \sum_{1\leq i,j\leq r} d_{ij}\mathbf{x}_{ij} + \sum_{1\leq i<j\leq r, 1 \leq k \leq r} d_{ijk}\mathbf{d}_{ijk}\]
to
\[ c = \sum_{1\leq i \leq r} d_i c_i + \sum_{1\leq i,j\leq r} d_{ij}c_{ij} + \sum_{1\leq i<j\leq r, 1 \leq k \leq r} d_{ijk}c_{ijk}\]
defines a brace epimorphism, with $\varphi(\mathbf{x}_i) = c_i$ for every $1\leq i \leq r$. Moreover, this is the unique possible homomorphism with this condition.
\end{proof}

\section*{Acknowledgements}
The first author is sponsored by the Natural Science Foundation of Shanghai (24ZR1422800) and the National Natural Science Foundation of China (12471018). The second and fourth authors are supported by the Conselleria d'Educació, Universitats i Ocupació, Generalitat Valenciana (grant: \mbox{CIAICO/2023/007}). The second author is also supported by the program "High-end Foreing Expert Program of China", Shanghai University. The third author is very grateful to the Conselleria d'Innovaci\'o, Universitats, Ci\`encia i
Societat Digital of the Generalitat (Valencian Community, Spain) and the Universitat de
Val\`encia for their financial support and grant to host researchers affected by the war in
Ukraine in research centres of the Valencian Community. The third author would also like to thank the Isaac Newton Institute for Mathematical Sciences, Cambridge, for support and hospitality during the Solidarity Supplementary Grant Program. This work was supported by EPSRC grant no EP/R014604/1. He is sincerely grateful to Agata Smoktunowicz.

\end{document}